\newcommand{\bb}{\mathbb}
\newcommand{\conv}{\mathrm{conv}}
\newcommand{\rec}{\mathrm{rec}}
\newcommand{\et}{\mathrm{ext}}
\newcommand{\ep}{\mathrm{exp}}
\newcommand{\R}{\bb R}
\newcommand{\Z}{\bb Z}
\newcommand{\N}{\bb N}
\newcommand{\intr}{\mathrm{\bf int}}
\newcommand{\bd}{\mathrm{\bf bd}}
\newcommand{\ol}{\overline}
\newenvironment{proof}{\noindent\emph{Proof.}}{\hspace*{\stretch{1}}$\square$}
\newtheorem{prop}{Proposition}
\newtheorem{theorem}[prop]{Theorem}
\newtheorem{lemma}[prop]{Lemma}
\newtheorem{cor}[prop]{Corollary}
\newcommand{\sm}{\setminus}
\def\st{\,|\,}
\begin{document}
\title{Convex Sets and Minimal Sublinear Functions}

\author{Amitabh Basu\thanks{Carnegie Mellon University,
abasu1@andrew.cmu.edu}\and
G\'erard Cornu\'ejols \thanks{Carnegie Mellon University, gc0v@andrew.cmu.edu.
Supported by   NSF grant CMMI0653419,
ONR grant N00014-03-1-0188 and ANR grant BLAN06-1-138894.} \and
Giacomo Zambelli\thanks{
Universit\`a di Padova,
giacomo@math.unipd.it
}}
\date{April 2010}
\maketitle
\begin{abstract}
We show that, given a closed convex set $K$ containing the origin in its interior, the support function of the set $\{y\in K^*\st \exists x\in K\mbox{ such that } \langle x,y \rangle =1\}$ is the pointwise smallest among all sublinear functions $\sigma$ such that $K=\{x\st \sigma(x)\leq 1\}$.
\end{abstract}

\section{Introduction}

The purpose of this note is to prove the following theorem. For $K \subseteq \mathbb{R}^n$, we use the notation
\begin{eqnarray*}
K^* & = &\{y\in\R^n\st \langle x,y \rangle \leq 1 \mbox{ for all } x\in K\} \\
\hat K & = &\{y\in K^* \st \langle x,y \rangle = 1 \mbox{ for some } x\in K\}.
\end{eqnarray*}
The set $K^*$ is the {\em polar} of $K$. The set $\hat K$ is contained in the relative boundary of $K^*$.
The polar $K^*$ is a convex set whereas $\hat K$ is not convex in general.

The {\em support function} of a nonempty set $T\subset \R^n$ is defined by
$$\sigma_T(x)=\sup_{y\in T}\langle x,y \rangle \quad \mbox{for all } x\in\R^n.$$
It is straightforward to show that support functions are {\em sublinear}, that is they are convex and positively homogeneous (A function $f:\,\R^n\rightarrow \R$ is {\em positively homogeneous} if $f(t x)=tf(x)$ for every $x\in\R^n$ and $t>0$), and $\sigma_T = \sigma_{\overline{\conv} (T)}$~\cite{Hir-Lem}.
 We will show that, if $K\subset \R^n$ is a closed convex set containing the origin in its interior, then  $K=\{x\st \sigma_{\hat K}(x)\leq 1\}$. The next theorem shows that $\sigma_{\hat K}$ is the smallest sublinear function with this property.

\begin{theorem}\label{thm:sublinear}
Let $K\subset \R^n$ be a closed convex set containing the origin in its interior. If $\sigma:\,\R^n\rightarrow \R$ is a sublinear function such that $K=\{x \in \R^n \st \sigma(x)\leq 1\}$, then $\sigma_{\hat K}(x)\leq \sigma(x)$ for all $x\in\R^n$.
\end{theorem}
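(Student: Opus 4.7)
My plan is to represent $\sigma$ itself as a support function and then exploit the bipolar theorem. Since $\sigma$ is finite-valued, sublinear, and hence continuous on $\R^n$, a standard Hahn--Banach argument shows that $\sigma = \sigma_T$ for
\[
T:=\{y\in\R^n\st \langle y,x\rangle\leq \sigma(x) \mbox{ for all } x\in\R^n\},
\]
and $T$ is nonempty, convex, and compact (compactness uses that $\sigma$ is bounded on the unit sphere, so $T$ is bounded). The hypothesis $K=\{x\st\sigma(x)\leq 1\}$ then reads exactly $K=T^*$. Because support functions are monotone in the underlying set, once I show $\hat K\subseteq T$ I will have $\sigma_{\hat K}\leq \sigma_T=\sigma$, which is the theorem.

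To prove $\hat K\subseteq T$, I apply the bipolar theorem to get $K^* = T^{**} = \overline{\conv}(T\cup\{0\})$, which simplifies to $\conv(T\cup\{0\}) = \{\lambda t \st \lambda\in[0,1],\; t\in T\}$ by compactness and convexity of $T$. Pick $y\in\hat K$. Since $\hat K\subseteq K^*$, write $y=\lambda t$ with $\lambda\in[0,1]$ and $t\in T$. By definition of $\hat K$, there exists $\bar x\in K$ with $\langle \bar x,y\rangle=1$, giving $\lambda\langle \bar x,t\rangle=1$. But $\bar x\in K=T^*$ forces $\langle \bar x,t\rangle\leq 1$, and $\lambda\leq 1$, so a product of two numbers each bounded by $1$ equaling $1$ forces both to equal $1$. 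In particular $\lambda=1$ and $y=t\in T$, as desired.

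The conceptually delicate point is that a generic element of $K^*$ is only known to be a scaled vector $\lambda t$ with $\lambda\in[0,1]$, and when $\lambda<1$ the desired inequality $\langle x,y\rangle\le\sigma(x)$ genuinely fails for certain $x$ (think of $x=\bar x$, where $\sigma(\bar x)=1$ yet $\langle \bar x,y\rangle$ could exceed $1$ if $t$ were chosen without regard to tightness). The extra information buried in the definition of $\hat K$, namely that the pairing with some $\bar x\in K$ achieves the value $1$, is precisely what rules out $\lambda<1$ and forces $y$ all the way into $T$; this is the crux of the proof.
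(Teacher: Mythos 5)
Your proof is correct, and it takes a genuinely different route from the paper's. The paper works analytically with $\sigma$ itself: Lemma~\ref{lemma:not-rec} shows $\sigma$ and $\sigma_{\hat K}$ already agree off $\rec(K)$, the remaining case $x\in\rec(K)$ is reduced to exposed points of $K^*$ lying in $\hat K$ via Straszewicz's theorem (Lemmas~\ref{lemma:ep-0} and~\ref{lemma:just-ep}), and the inequality $\sigma(x)\geq\langle x,\bar y\rangle$ is then extracted from subadditivity applied to $\sigma(\bar x-\delta^{-1}x)$ together with a compactness/limit argument as $\delta\to\infty$. You instead prove the set version (Theorem~\ref{thm:set-version}) directly: after invoking H\"ormander's representation $\sigma=\sigma_T$ with $T$ nonempty compact convex --- which the paper cites only to \emph{state} the set version, not to prove it --- the bipolar theorem gives $K^*=T^{**}=\conv(T\cup\{0\})=\{\lambda t\st\lambda\in[0,1],\,t\in T\}$, and the tightness condition $\langle\bar x,y\rangle=1$ defining $\hat K$ forces $\lambda=1$, i.e.\ $\hat K\subseteq T$. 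This is considerably shorter and avoids Straszewicz's theorem and the limiting argument entirely; the trade-off is that the convex-analytic weight is front-loaded into H\"ormander's theorem and the bipolar theorem, while the paper's longer route yields extra structural information about $\rho_K$ (e.g.\ that it is already the support function of $\hat K\cap\ep(K^*)$), which is of independent interest. One step to tighten: ``a product of two numbers each bounded by $1$ equaling $1$ forces both to equal $1$'' is false in general (consider $(-1)\cdot(-1)=1$); your conclusion is valid only because $\lambda\geq 0$, which together with $\lambda\langle\bar x,t\rangle=1$ gives $\lambda>0$ and $\langle\bar x,t\rangle=1/\lambda\geq 1$, and then $\langle\bar x,t\rangle\leq 1$ forces $\lambda=1$. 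With that one sentence made explicit, the argument is complete.
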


In the remainder we define $\rho_K = \sigma_{\hat K}$.

\medskip
Let $K\subset \R^n$ be a closed convex set containing the origin in its interior. A standard concept in convex analysis~\cite{Hir-Lem,Rock} is that of {\em gauge} (sometimes called Minkowski function), which is the function $\gamma_K$  defined by $$\gamma_K(x)=\inf\{t>0\st  t^{-1}x\in K\} \quad \mbox{ for all } x\in \R^n.$$
By definition $\gamma_K$ is nonnegative. One can readily verify that $K=\{x\st\gamma_K(x)\leq 1\}$. It is well known that $\gamma_K$ is the support function of $K^*$ (see~\cite{Hir-Lem} Proposition 3.2.4).

Given any sublinear function $\sigma$ such that $K=\{x\st \sigma(x)\leq 1\}$, it follows from positive homogeneity that $\sigma(x)=\gamma_K(x)$ for every $x$ where $\sigma(x)>0$. Hence $\sigma(x)\leq \gamma_K(x)$ for all $x\in\R^n$. On the other hand, we prove in Theorem~\ref{thm:sublinear} that the sublinear function $\rho_K$ satisfies $\rho_K(x)\leq\sigma(x)$ for all $x\in\R^n$. In words, $\gamma_K$ is the largest sublinear function such that $K=\{x \in \R^n \st \sigma(x)\leq 1\}$ and $\rho_K$ is the smallest.
\medskip

Note that $\rho_K$ can take negative values, so in general it is different from the gauge $\gamma_K$. Indeed the recession cone of $K$, which is the set $\rec(K)=\{x\in K\st t x\in K \mbox{ for all } t>0\}$, coincides with $\{x\in K\st \sigma(x)\leq 0\}$ for every sublinear function $\sigma$ such that $K=\{x\st \sigma(x)\leq 1\}$. In particular $\rho_K(x)$ can be negative for $x\in\rec(K)$.
For example, let $K=\{x\in\R^2\st x_1\leq 1,\, x_2\leq 1\}$. Then $K^*=\conv\{(0,0),(1,0),(0,1)\}$ and $\hat K=\conv\{(1,0),(0,1)\}$. Therefore, for every $x\in\R^2$, $\gamma_K(x)=\max\{0,x_1,x_2\}$ and $\rho_K(x)=\max\{x_1,x_2\}$. In particular, $\rho_K(x)<0$ for every $x$ such that $x_1<0$, $x_2<0$.

\medskip
By H\"ormander's theorem \cite{hor}, a sublinear function $\sigma:\,\R^n\rightarrow \R$ is the support function of a unique bounded closed convex set $C \subset \R^n$, say $\sigma = \sigma_C$. So the condition $K=\{x \in \R^n \st \sigma(x)\leq 1\}$ says $K = C^*$. Thus Theorem~\ref{thm:sublinear} can be restated in its set version.

\begin{theorem}\label{thm:set-version}
Let $K\subset \R^n$ be a closed convex set containing the origin in its interior. If $C\subset \R^n$ is a bounded closed convex set such that $K=C^*$, then $\hat K \subset C$.
\end{theorem}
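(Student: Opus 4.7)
The plan is to use the bipolar theorem to re-express $K^*$ in terms of $C$, and then unpack the definition of $\hat K$ to pin each of its points down inside $C$.

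First I would take polars on both sides of $K = C^*$ to obtain $K^* = C^{**}$. Since $C$ is compact and convex, the bipolar theorem, in the form $C^{**} = \cl\conv(C \cup \{0\})$ that requires no assumption about $C$ containing the origin, simplifies to $K^* = \conv(C \cup \{0\})$, because compactness of $C$ makes $\conv(C \cup \{0\})$ already closed. Convexity of $C$ then identifies this set with $\{\lambda c : c \in C,\ \lambda \in [0,1]\}$.

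Next I would take an arbitrary $y \in \hat K$. By definition $y \in K^*$ and there is some $x \in K = C^*$ with $\langle x, y \rangle = 1$. By the previous step, $y = \lambda c$ for some $c \in C$ and $\lambda \in [0,1]$, and the case $\lambda = 0$ is excluded since it would force $\langle x, y \rangle = 0$. Since $x \in C^*$, we have $\langle x, c \rangle \leq 1$, so
$$ 1 = \langle x, y \rangle = \lambda \langle x, c \rangle \leq \lambda \leq 1,$$
which forces $\lambda = 1$ (and $\langle x, c \rangle = 1$). Hence $y = c \in C$, as desired.

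The argument is short once the bipolar identity is in hand; the only place that requires mild care is justifying that $\conv(C \cup \{0\})$ is already closed, so that no closure operation is needed on the right-hand side of the bipolar formula. This is the main, and essentially only, obstacle, and it is handled by compactness of $C$ together with the observation that the convex hull of a compact set together with a single extra point is compact.
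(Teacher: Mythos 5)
Your proof is correct, but it follows a genuinely different route from the paper's. The paper never argues Theorem~\ref{thm:set-version} directly: it obtains it, via H\"ormander's theorem, as a restatement of Theorem~\ref{thm:sublinear}, whose proof in Section~\ref{SEC:proof} runs through Straszewicz's theorem on exposed points, Lemmas~\ref{lemma:not-rec}--\ref{lemma:just-ep}, and a compactness argument with a sequence $\delta_i\to\infty$. You instead attack the set version head-on: the bipolar theorem gives $K^*=C^{**}=\conv(C\cup\{0\})=\{\lambda c \st c\in C,\ \lambda\in[0,1]\}$, with no closure operation needed because the convex hull of a compact subset of $\R^n$ is compact, and then for $y=\lambda c\in\hat K$ with witness $x\in K=C^*$ the chain $1=\langle x,y\rangle=\lambda\langle x,c\rangle\le\lambda\le 1$ forces $\lambda=1$ and $y=c\in C$. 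Every step checks out, including the exclusion of $\lambda=0$; the only degenerate case, $C=\emptyset$, gives $K=\R^n$ and $\hat K=\emptyset$, so the claim is vacuous there. Your argument is substantially shorter and eliminates Straszewicz's theorem entirely; moreover, read back through H\"ormander's theorem (which the paper already invokes just to formulate the set version), it yields Theorem~\ref{thm:sublinear} as an immediate corollary. What the paper's longer route buys is a proof of the functional statement that manipulates an arbitrary sublinear $\sigma$ directly, without passing through the representation $\sigma=\sigma_C$, and that additionally identifies the limit $y^*$ of the optimizing sequence as the exposed point $\bar y$.
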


When $K$ is bounded, this theorem is trivial (the hypothesis $K=C^*$ becomes $K^*=C$. The conclusion $\hat K \subset C$ is obvious because one always has $\hat K \subset K^*$.) So the interesting case of Theorem~\ref{thm:sublinear} is when $K$ is unbounded.

\medskip
We present the proof of Theorem~\ref{thm:sublinear} in Section~\ref{SEC:proof}. Theorem~\ref{thm:sublinear} has applications in integer programming. In particular it is used to establish the relationship between minimal inequalities and maximal lattice-free convex sets \cite{basu-conf-cor-zam}, \cite{bas-conf-cor-zam}. We summarize these results in Section~\ref{SEC:IP}.

\section{Proof of Theorem~\ref{thm:sublinear}}
\label{SEC:proof}

We will need Straszewicz's theorem~\cite{Str} (see~\cite{Rock} Theorem 18.6). Given a closed convex set $C$, a point $x\in C$ is {\em extreme} if it cannot be written as a proper convex combination of two distinct points in $C$. A point $x\in C$ is {\em exposed} if there exists a supporting hyperplane $H$ for $C$ such that $H\cap C=\{x\}$. Clearly exposed points are extreme. We will denote by $\et(C)$ the set of extreme points and $\ep(C)$ the set of exposed points of $C$.

\begin{theorem}\label{thm:str}
Given a closed convex set $C$, the set of exposed points of $C$ is a dense subset of the set of extreme points of $C$.
\end{theorem}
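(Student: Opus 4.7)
The key tool is the \emph{farthest-point lemma}: if $y\in\R^n$ and $x^*\in C$ achieves $\max_{x\in C}\|x-y\|$, then $x^*\in\ep(C)$, since the hyperplane tangent at $x^*$ to the ball $\ol B(y,\|x^*-y\|)$ supports $C$ and meets that sphere (hence $C$) only at $x^*$. So it is enough, given $x_0\in\et(C)$ and $\epsilon>0$, to find $y\in\R^n$ with a farthest point of $C$ from $y$ lying in $B(x_0,\epsilon)$.

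\paragraph{Compact case, by induction on $\dim C$.} The base $\dim C\le 1$ is trivial. In the inductive step for a compact $C$, extremeness of $x_0$ implies $x_0\notin\conv(C\sm B(x_0,\epsilon))$ (else $x_0$ would be a proper convex combination of points of $C$ distinct from $x_0$), and strict separation then supplies a linear functional $\ell_0$ whose argmax on $C$ is a face $F\subseteq B(x_0,\epsilon)$ of dimension strictly less than $\dim C$. The inductive hypothesis applied inside $F$ yields $p\in\ep(F)$ with $\|p-x_0\|<\epsilon$. The standard tilt $\ell_0+\lambda\ell'$, where $\ell'$ exposes $p$ within $F$, exposes $p$ in $C$ for $\lambda>0$ small: on $F$ the tilt is uniquely maximized at $p$ by construction, and on $C\sm F$ the positive gap $\max_C\ell_0-\ell_0(x)$, bounded below outside any neighborhood of $F$ by compactness, dominates the perturbation.

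\paragraph{Unbounded case.} For unbounded $C$ the supremum $\sup_{x\in C}\|x-y\|$ is never attained, so the farthest-point lemma needs localization. The plan is to fix $R\gg\epsilon$ with $\|x_0\|+\epsilon<R$, apply the compact case to $C_R:=C\cap\ol B(x_0,R)$ to obtain $p\in\ep(C_R)$ with $\|p-x_0\|<\epsilon$, and then promote $p$ to $\ep(C)$. Since $\|p-x_0\|<\epsilon<R$, $p$ is interior to $\ol B(x_0,R)$, hence the normal cone to $C$ at $p$ equals the normal cone to $C_R$ at $p$, and any exposing functional of $p$ in $C_R$ lies automatically in the recession polar $\rec(C)^*$ (all supporting functionals for $C$ are bounded above on $C$, hence in $\rec(C)^*$). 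Since $\et(C)\neq\emptyset$ forces $\rec(C)$ to be pointed, the interior of $\rec(C)^*$ is nonempty; tilting the $C_R$-exposing functional of $p$ by a small multiple of an element of the interior of $\rec(C)^*$ produces a functional lying in the interior of $\rec(C)^*$, whose maximum on $C$ is therefore attained on a compact face. By the same tilting trick used in the compact step, applied inside this compact face, one extracts an exposed point of $C$ close to $p$, hence close to $x_0$.

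\paragraph{Main obstacle.} The delicate step is this promotion from $\ep(C_R)$ to $\ep(C)$: the tilted functional must remain bounded above on the unbounded $C$ (which uses the strict negativity on $\rec(C)\sm\{0\}$ obtained from tilting by an interior element of $\rec(C)^*$), while its argmax on $C$ must stay within $\ol B(x_0,R)$ so that it coincides with its argmax on $C_R$ and therefore lies near $p$. Arranging both at the same tilt parameter, and refining by a second tilt to pick out a singleton from the argmax, is the technical heart of the unbounded case. The extremeness of $x_0$ enters twice: directly in the compact induction (to separate $x_0$ strictly from $\conv(C\sm B(x_0,\epsilon))$), and indirectly in the unbounded case through the line-freeness of $C$, which makes the interior of $\rec(C)^*$ nonempty.
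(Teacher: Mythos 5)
The paper offers no proof of this statement: it is quoted as Straszewicz's theorem, with references to \cite{Str} and to Theorem 18.6 of \cite{Rock}. So I can only judge your proposal on its own terms, and it has a genuine gap at its central step. The problem is the tilting claim: from an exposed face $F=\arg\max_C\ell_0$ and a point $p$ exposed in $F$ by $\ell'$, you assert that $\ell_0+\lambda\ell'$ exposes $p$ in $C$ for small $\lambda>0$. This is false, and your own justification points at the hole --- the gap $\max_C\ell_0-\ell_0(x)$ is bounded below only \emph{outside} a neighborhood of $F$, and the points that defeat the tilt are precisely those of $C\setminus F$ arbitrarily close to $F$. Concretely, let $C=\conv\bigl(B((0,1),1)\cup B((0,-1),1)\bigr)\subset\R^2$, $\ell_0=x_1$, so that $F=\{1\}\times[-1,1]$, and take $p=(1,1)$, $\ell'=x_2$. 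Then $p\in\ep(F)$, but for every $\lambda>0$ the maximum of $x_1+\lambda x_2$ over $C$ equals $\lambda+\sqrt{1+\lambda^2}>1+\lambda$ and is attained on the upper circular arc, not at $p$; indeed $p$ is extreme but not exposed in $C$ (its normal cone is the single ray spanned by $(1,0)$), so no tilt whatsoever can succeed. An exposed point of an exposed face need not be exposed --- that is exactly why Straszewicz's theorem asserts only density. Since your induction closes only through this step, the compact case is not established; the unbounded case inherits the same defect and, in addition, its key localization (keeping the argmax of the tilted functional inside $\ol B(x_0,R)$ so that it coincides with the argmax over $C_R$) is announced as ``the technical heart'' but never actually carried out.

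What is frustrating is that you state the correct tool and then abandon it. For compact $C$ the farthest-point lemma finishes the proof in a few lines, with no induction and no tilting: strictly separate $x_0$ from the compact convex set $\conv(C\setminus B(x_0,\epsilon))$ by $\langle a,z\rangle\le\langle a,x_0\rangle-\delta$, set $y=x_0-ta$, and observe that for $t>\mathrm{diam}(C)^2/(2\delta)$ every $z\in C\setminus B(x_0,\epsilon)$ satisfies $\|z-y\|^2\le\|x_0-y\|^2+\mathrm{diam}(C)^2-2t\delta<\|x_0-y\|^2$, so the farthest point of $C$ from $y$ lies in $B(x_0,\epsilon)$ and is exposed. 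The unbounded case does require a genuine additional reduction --- this is where the proof in \cite{Rock} spends its effort --- but it too must be routed through farthest points on a suitable compact truncation rather than through the face-tilting principle, which is simply not true.
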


Let $K$ be a closed convex set containing the origin in its interior. Let $\sigma$ be a sublinear function such that $K=\{x\st \sigma(x)\leq 1\}$. The boundary of $K$, denoted by $\bd(K)$, is the set $\{x\in K\st \sigma(x)=1\}$.

\begin{lemma}~\label{lemma:not-rec} For every $x\notin\rec(K)$, $\sigma(x)=\rho_K(x)=\sup_{y\in K^*} \langle x,y \rangle$.
\end{lemma}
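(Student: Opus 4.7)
The plan is to combine two easy inequalities with one application of compactness of $K^*$. The key observation is that the hypothesis $x\notin\rec(K)$ forces $\sigma(x)>0$, which lets us identify $\sigma(x)$ with the gauge $\gamma_K(x)$, and then the standard identity $\gamma_K=\sigma_{K^*}$ gives the middle equality for free.

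\medskip\noindent\textbf{Step 1: Reduce to $\sigma(x)>0$.} I would first argue that $x\notin \rec(K)$ implies $\sigma(x)>0$. Indeed, if $\sigma(x)\le 0$, then by positive homogeneity $\sigma(tx)=t\sigma(x)\le 0\le 1$ for every $t>0$, so $tx\in K$ for every $t>0$, which would place $x$ in $\rec(K)$, a contradiction.

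\medskip\noindent\textbf{Step 2: Use the remark preceding the lemma to get $\sigma(x)=\sup_{y\in K^*}\langle x,y\rangle$.} Since $\sigma(x)>0$, the sentence in the introduction (itself a one-line positive-homogeneity argument) gives $\sigma(x)=\gamma_K(x)$. As noted in the excerpt, $\gamma_K$ is the support function of $K^*$, hence $\sigma(x)=\gamma_K(x)=\sup_{y\in K^*}\langle x,y\rangle$. This establishes one of the two claimed equalities.

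\medskip\noindent\textbf{Step 3: Show $\rho_K(x)=\sup_{y\in K^*}\langle x,y\rangle$ by exhibiting a maximizer in $\hat K$.} Since $0\in\intr(K)$, the polar $K^*$ is compact, so the continuous linear functional $y\mapsto\langle x,y\rangle$ attains its supremum on $K^*$ at some $y^\ast$; thus $\langle x,y^\ast\rangle=\gamma_K(x)>0$. Set $x^\ast:=x/\gamma_K(x)$; by positive homogeneity $\sigma(x^\ast)=1$, so $x^\ast\in K$, and $\langle x^\ast,y^\ast\rangle=\langle x,y^\ast\rangle/\gamma_K(x)=1$. This witnesses $y^\ast\in\hat K$. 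Therefore
\[
\rho_K(x)=\sup_{y\in\hat K}\langle x,y\rangle\ge\langle x,y^\ast\rangle=\gamma_K(x)=\sup_{y\in K^*}\langle x,y\rangle,
\]
and the reverse inequality is immediate from $\hat K\subseteq K^*$. Combining with Step 2 yields $\sigma(x)=\rho_K(x)=\sup_{y\in K^*}\langle x,y\rangle$.

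\medskip\noindent\textbf{Main obstacle.} There is really no deep obstacle here: the argument is essentially a bookkeeping exercise. The only point that requires a moment of care is producing a maximizer of $\langle x,\cdot\rangle$ that lies in $\hat K$ rather than merely in $K^*$, and this is handled automatically by normalizing $x$ through the gauge, which is the step where the hypothesis $x\notin\rec(K)$ (equivalently $\gamma_K(x)>0$) is essential.
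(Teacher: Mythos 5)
Your proof is correct, and it follows essentially the same outline as the paper's: show $t=\sigma(x)>0$, identify $t$ with $\sup_{y\in K^*}\langle x,y\rangle$, and exhibit a point of $\hat K$ at which this supremum is attained. The only real difference is mechanical: the paper produces the witness $\bar y\in\hat K$ by taking a supporting hyperplane of $K$ at the boundary point $t^{-1}x$ (and proves $t=\sup_{y\in K^*}\langle x,y\rangle$ directly from $t^{-1}x\in\bd(K)$), whereas you invoke the cited identity $\gamma_K=\sigma_{K^*}$ as a black box and then use compactness of $K^*$ to get a maximizer $y^*$, normalizing $x$ by the gauge to check $y^*\in\hat K$. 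These are two views of the same fact (a maximizer of $\langle x,\cdot\rangle$ over $K^*$ corresponds to a supporting hyperplane of $K$ at $t^{-1}x$), so nothing is gained or lost; your version is slightly shorter at the cost of leaning on the external reference for $\gamma_K=\sigma_{K^*}$, which the paper in effect reproves inline.
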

\begin{proof} Let $x\notin\rec(K)$. Then $t=\sigma(x)>0$. By positive homogeneity, $\sigma(t^{-1}x)=1$, hence $t^{-1}x\in\bd(K)$.
Since $K$ is closed and convex, there exists a supporting hyperplane for $K$ containing $t^{-1}x$. Since $0\in\intr(K)$, this implies that there exists $\bar y\in K^*$ such that $(t^{-1}x)\bar y=1$. In particular $\bar y\in \hat K$, hence by definition $\rho_K(x)\geq \langle x, \bar y \rangle   = t$.

Furthermore, for any $y\in K^*$, $\langle t^{-1}x,y \rangle \leq 1$, hence $\langle x,y \rangle \leq t$, which implies $t\geq \sup_{y\in K^*} \langle x,y \rangle$. Thus $$\rho_K(x)\geq t\geq \sup_{y\in K^*} \langle x,y \rangle \geq \sup_{y\in \hat K} \langle x,y \rangle =\rho_K(x),$$
where the last inequality holds since $\hat K\subset K^*$, hence equality holds throughout.
\end{proof}

\begin{cor}
$K=\{x\st \rho_K(x)\leq 1\}$.
\end{cor}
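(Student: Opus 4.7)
The plan is to apply Lemma~\ref{lemma:not-rec} with the specific choice $\sigma=\gamma_K$, the gauge of $K$. The introduction recalls that $\gamma_K$ is sublinear and that $K=\{x\st\gamma_K(x)\leq 1\}$, so the lemma applies and yields $\rho_K(x)=\gamma_K(x)$ for every $x\notin\rec(K)$. The corollary then amounts to showing the two inclusions, with the recession cone as the only mildly delicate case.

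For the inclusion $K\subseteq\{x\st\rho_K(x)\leq 1\}$, I would take $x\in K$ and split on whether $x\in\rec(K)$. If $x\notin\rec(K)$, the lemma gives $\rho_K(x)=\gamma_K(x)\leq 1$ directly since $x\in K$. If $x\in\rec(K)$, then $tx\in K$ for all $t>0$, so $\langle tx,y\rangle\leq 1$ for every $y\in K^*$; letting $t\to\infty$ forces $\langle x,y\rangle\leq 0$ for all $y\in K^*$, and since $\hat K\subseteq K^*$ this gives $\rho_K(x)=\sup_{y\in\hat K}\langle x,y\rangle\leq 0\leq 1$.

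For the reverse inclusion, I would argue by contradiction: suppose $\rho_K(x)\leq 1$ but $x\notin K$. Then $\gamma_K(x)>1$, and in particular $\gamma_K(x)>0$, which rules out $x\in\rec(K)$ (recession directions satisfy $\gamma_K=0$). Hence the lemma applies and gives $\rho_K(x)=\gamma_K(x)>1$, contradicting $\rho_K(x)\leq 1$.

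The only real obstacle is the $x\in\rec(K)$ case, where the lemma is silent; this is handled by the elementary observation that recession directions pair nonpositively with every element of $K^*$. Everything else is bookkeeping on top of Lemma~\ref{lemma:not-rec}.
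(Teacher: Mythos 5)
Your proof is correct and follows the route the paper intends: the corollary is stated without proof immediately after Lemma~\ref{lemma:not-rec} precisely because it is meant to follow from that lemma, and you supply the only detail the lemma leaves open, namely that $\rho_K\leq 0$ on $\rec(K)$ (which the paper also notes in the introduction). Your instantiation with $\sigma=\gamma_K$ and the limiting argument $\langle x,y\rangle\leq 0$ for $y\in K^*$ when $x\in\rec(K)$ are both sound.
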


\begin{lemma}\label{lemma:ep-0}Given an exposed point $\bar y$ of $K^*$ different from the origin, there exists $x\in K$ such that $\langle x,\bar y \rangle =1$ and $\langle x,y \rangle <1$ for all $y\in K^*$ distinct from $\bar y$.
\end{lemma}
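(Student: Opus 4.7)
The plan is to translate the exposed-point condition into the language of supporting hyperplanes and then use polarity. Since $\bar y$ is an exposed point of $K^*$, there is a nonzero vector $x_0 \in \R^n$ such that $\alpha := \langle x_0, \bar y\rangle = \sup_{y\in K^*}\langle x_0, y\rangle$ and $\langle x_0, y\rangle < \alpha$ for every $y \in K^*$ with $y \neq \bar y$. The natural candidate for the desired point is $x := x_0/\alpha$ (after showing $\alpha > 0$), since this immediately gives $\langle x,\bar y\rangle = 1$ and $\langle x, y\rangle < 1$ for all $y\in K^*\setminus\{\bar y\}$.

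The first step I would carry out is to verify that $\alpha > 0$. Because $0 \in \intr(K)$, the polar $K^*$ is bounded and contains the origin. In particular $\alpha = \sup_{y\in K^*}\langle x_0, y\rangle \geq \langle x_0, 0\rangle = 0$. If $\alpha$ were $0$, then $0$ would also lie on the supporting hyperplane $\{y \st \langle x_0, y\rangle = \alpha\}$; but uniqueness of the exposed intersection forces $0 = \bar y$, contradicting the hypothesis $\bar y \neq 0$. So $\alpha > 0$ and $x := x_0/\alpha$ is well-defined.

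The second step is to show $x \in K$. By construction $\langle x, y\rangle \leq 1$ for every $y \in K^*$, so $x$ belongs to the bipolar $K^{**}$. Because $K$ is closed, convex, and contains the origin, the bipolar theorem gives $K^{**} = K$, hence $x \in K$. This completes the argument, since the strict inequality $\langle x, y\rangle < 1$ for $y \in K^*\setminus\{\bar y\}$ and the equality $\langle x, \bar y\rangle = 1$ are inherited from the definition of $x_0$ after dividing by $\alpha > 0$.

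There is essentially no serious obstacle here; the only subtle point is ruling out $\alpha = 0$, and this is handled by the observation that $0 \in K^*$ together with the uniqueness clause in the definition of exposed point. Everything else reduces to rescaling and an appeal to the bipolar theorem.
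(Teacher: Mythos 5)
Your proposal is correct and follows essentially the same route as the paper: take the supporting hyperplane exposing $\bar y$, use $0\in K^*$ together with the uniqueness of the exposed intersection to conclude the offset is strictly positive, rescale the normal, and invoke $K^{**}=K$. Your write-up simply makes the positivity argument more explicit than the paper's one-line justification.
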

\begin{proof} If $\bar y\neq 0$ is an exposed point of $K^*$, then there exists a supporting hyperplane $H=\{y\st \langle a, y \rangle  = \beta\}$ such that $\langle a, \bar y \rangle =\beta$ and $\langle a, y \rangle < \beta$ for every $y\in K^*\sm\{\bar y\}$. Since $0\in K^*$ and $\bar y\neq 0$, $\beta >0$. Thus the point $x=\beta^{-1}a\in K^{**}=K$ satisfies the statement (where $K=K^{**}$ holds because $0\in K$).
\end{proof}

\bigskip
The next lemma states that $\hat K$ and $\hat K \cap \ep(K^*)$ have the same support function.

\begin{lemma}\label{lemma:just-ep} For every $x\in \R^n$, $\rho_K(x)=\sup_{y\in \hat K\cap\ep(K^*)} \langle x,y \rangle $.
\end{lemma}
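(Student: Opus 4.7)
The plan is to prove the non-trivial inequality $\rho_K(x)\leq \sup_{y\in \hat K\cap\ep(K^*)} \langle x,y \rangle$; the reverse is immediate since $\hat K\cap\ep(K^*)\subseteq \hat K$. Fix $x\in\R^n$, let $\alpha$ denote the right-hand supremum, and let $\bar y\in\hat K$ be arbitrary. It then suffices to show that $\langle x,\bar y\rangle\leq\alpha$ and take the supremum over $\bar y$.

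Since $0\in\intr(K)$, the polar $K^*$ is compact, so by Minkowski--Carath\'eodory one can write $\bar y=\sum_{i=1}^m \lambda_i y_i$ with $y_i\in\et(K^*)$, $\lambda_i>0$ and $\sum_i\lambda_i=1$. The first key step is to force each $y_i$ to lie in $\hat K\setminus\{0\}$. By definition of $\hat K$ there exists $\bar x\in K$ with $\langle\bar x,\bar y\rangle=1$, hence $1=\sum_i\lambda_i\langle\bar x,y_i\rangle$; but $\langle\bar x,y_i\rangle\leq 1$ for each $i$ because $\bar x\in K$ and $y_i\in K^*$, so equality forces $\langle\bar x,y_i\rangle=1$ for all $i$. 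In particular every $y_i$ belongs to $\hat K$ and is non-zero.

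The second step invokes Straszewicz's theorem (Theorem \ref{thm:str}): for each $i$ there is a sequence $y_{i,k}\in\ep(K^*)$ with $y_{i,k}\to y_i$. Since $y_i\neq 0$, eventually $y_{i,k}\neq 0$, and Lemma \ref{lemma:ep-0} guarantees that every non-zero exposed point of $K^*$ lies in $\hat K$. Thus $y_{i,k}\in\hat K\cap\ep(K^*)$ for all large $k$, giving $\langle x,y_i\rangle=\lim_k\langle x,y_{i,k}\rangle\leq\alpha$. Combining linearly, $\langle x,\bar y\rangle=\sum_i\lambda_i\langle x,y_i\rangle\leq\alpha$, which is what was needed.

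The main technical obstacle is that a generic $\bar y\in\hat K$ need not be extreme, let alone exposed, in $K^*$; the two-step reduction first passes to extreme points of $K^*$ via the Minkowski representation of $\bar y$, then approximates those by exposed points via Straszewicz, all while using the normalization $\langle\bar x,\bar y\rangle=1$ to keep the intermediate points inside $\hat K$ and away from the origin.
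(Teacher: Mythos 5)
Your proof is correct and follows essentially the same route as the paper's: decompose $\bar y\in\hat K$ as a convex combination of extreme points of the compact set $K^*$, use the normalization $\langle\bar x,\bar y\rangle=1$ to force those extreme points into $\hat K\setminus\{0\}$, and then approximate them by nonzero exposed points via Straszewicz's theorem together with Lemma~\ref{lemma:ep-0}. Your handling of the origin in the Straszewicz approximation step is in fact slightly more careful than the paper's, but the argument is the same.
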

\begin{proof}
We first show that $\rho_K(x)=\sup_{y\in \hat K\cap\et(K^*)} \langle x,y \rangle$. Given $y\in \hat K$ we show that there exists an extreme point $y'$ of $K^*$ in $\hat K$ such that $\langle x,y \rangle \leq \langle x,y' \rangle$. Since $y\in \hat K$, there exists $\bar x\in K$ such that $ \langle \bar x,y \rangle =1$.  The point $y$ is a convex combination of extreme points $y_1,\ldots, y_k$ of $K^*$, and each $y_i$ satisfies $\langle \bar x,y^i \rangle =1$. Thus $y^1,\ldots,y^k\in \hat K$, and $\langle x,y^i \rangle \geq \langle x,y \rangle$ for at least one $i$.

By Straszewicz's theorem (Theorem~\ref{thm:str}) the set of exposed points in $K^*$ is a dense subset of the extreme points of $K^*$. By Lemma~\ref{lemma:ep-0}, all exposed points of $K^*$ except the origin are in $\hat K$, hence $\ep(K^*)\cap \hat K$ is dense in $\et(K^*)\cap \hat K$. Therefore $\rho_K(x)=\sup_{y\in \hat K\cap\ep(K^*)} \langle x,y \rangle$.
\end{proof}

\bigskip

A function $\sigma$ is {\em subadditive} if $\sigma(x_1+x_2)\leq \sigma(x_1)+\sigma(x_2)$ for every $x_1,x_2\in\R^n$. It is easy to show that $\sigma$ is sublinear if and only if it is subadditive and positively homogeneous.
\bigskip
\bigskip

\noindent{\em Proof of Theorem~\ref{thm:sublinear}.} By Lemma~\ref{lemma:not-rec}, we only need to show $\sigma(x)\geq \rho_K(x)$ for points $x\in\rec(K)$. By Lemma~\ref{lemma:just-ep} it is sufficient to show that, for every exposed point $\bar y$ of $K^*$ contained in $\hat K$, $\sigma(x)\geq \langle x,\bar y \rangle$.\medskip

Let $\bar y$ be an exposed point of $K^*$ in $\hat K$. By Lemma~\ref{lemma:ep-0} there exists $\bar x\in K$ such that $\langle \bar x,\bar y \rangle =1$ and $\langle \bar x, y \rangle <1 $ for all $y\in K^*$ distinct from $\bar y$. Note that $\bar x\in\bd(K)$.

We observe that for all $\delta>0$, $\bar x-\delta^{-1} x\notin\rec(K)$.  Indeed, since $x\in\rec(K)$, $\bar x+\delta^{-1} x\in K$. Hence $\bar x-\delta^{-1} x\notin\intr(K)$ because $\bar x\in\bd(K)$. Since $0\in\intr(K)$ and $\bar x-\delta^{-1} x\notin\intr(K)$, then $\bar x-\delta^{-1} x\notin\rec(K)$. Thus by Lemma~\ref{lemma:not-rec}
\begin{equation}\label{eq:delta}
\sigma(\bar x-\delta^{-1} x)=\sup_{y\in K^*} \langle \bar x-\delta^{-1} x,y \rangle .
\end{equation}

Since $\bar x\in\bd(K)$, $\sigma(\bar x)=1$. By subadditivity, $1=\sigma (\bar x)\leq \sigma(\bar x-\delta^{-1} x)+\sigma(\delta^{-1} x).$
By positive homogeneity, the latter implies that $\sigma(x)\geq \delta-\delta \sigma(\bar x-\delta^{-1} x)$
for all $\delta>0$. By~(\ref{eq:delta}),
$$\sigma(x)\geq \inf_{y\in K^*} [\delta (1-\langle \bar x,y \rangle) + \langle x,y \rangle ],\quad\forall \delta>0$$
hence
$$\sigma(x)\geq \sup_{\delta>0}\inf_{y\in K^*} [\delta (1-\langle \bar x,y \rangle)  +\langle x,y \rangle ].$$
Let $g(\delta)=\inf_{y\in K^*} \delta (1-\langle \bar x,y \rangle) + \langle x,y \rangle$. Since $\bar x\in K$, $1-\langle \bar x, y \rangle \geq 0$ for every $y\in K^*$. Hence $\delta (1-\langle \bar x,y \rangle) +\langle x,y \rangle$ defines an increasing affine function of $\delta $ for each $y\in K^*$, therefore $g(\delta)$ is increasing and concave. Thus $\sup_{\delta>0}g(\delta)=\lim_{\delta\rightarrow \infty}g(\delta)$.

Since $0\in\intr(K)$, $K^*$ is compact. Hence, for every $\delta>0$ there exists $y(\delta)\in K^*$ such that $g(\delta)=\delta (1-\langle \bar x,y(\delta) \rangle)+\langle x,y(\delta) \rangle$. Furthermore, there exists a sequence $(\delta_i)_{i\in\N}$ such that $\lim_{i\rightarrow \infty}\delta_i=+\infty$ and the sequence $(y_i)_{i\in\N}$ defined by $y_i=y(\delta_i)$ converges, because in a compact set every sequence has a convergent subsequence. Let $y^*=\lim_{i\rightarrow \infty}y_i$.
\medskip

We conclude the proof by showing that $\sigma(x)\geq \langle x,y^* \rangle$ and $y^*=\bar y$.
\begin{eqnarray*}
\sigma(x)\geq \sup_{\delta>0} g(\delta) &=& \lim_{i\rightarrow \infty} g(\delta_i)\\
&=&\lim_{i\rightarrow \infty} [\delta_i (1-\langle \bar x,y_i \rangle)+\langle x,y_i \rangle]\\
&=&\lim_{i\rightarrow \infty} \delta_i (1-\langle \bar x,y_i \rangle )+\langle x,y^*\rangle \\
&\geq & \langle x,y^* \rangle
\end{eqnarray*}
where the last inequality follows from the fact that $\delta_i (1-\langle \bar x,y_i\rangle )\geq 0$ for all $i\in\N$.
Finally, since $\lim_{i\rightarrow \infty} \delta_i (1-\langle \bar x,y_i \rangle )$ is bounded and $\lim_{i\rightarrow \infty} \delta_i=+\infty$, it follows that
$\lim_{i\rightarrow \infty}(1-\langle \bar x,y_i \rangle )=0$, hence $\langle \bar x,y^* \rangle =1$. By our choice of $\bar x$, $\langle \bar x, y \rangle  <1$ for every $y\in K^*$ distinct from $\bar y$. Hence $y^*=\bar y$. \hfill $\Box$

\section{An application to integer programming}
\label{SEC:IP}

In~\cite{basu-conf-cor-zam} and \cite{bas-conf-cor-zam}, Basu et al.
apply Theorem~\ref{thm:sublinear} to cutting plane theory.

Consider a mixed integer linear program, and the optimal tableau of the linear programming
relaxation. We select $n$ rows of the tableau, relative to $n$ basic integer variables
$x_1,\ldots, x_n$.  Let $s_1,\ldots, s_m$ denote the nonbasic variables. Let $f_i$ be the value of $x_i$ in the basic solution associated with the tableau, $i=1,\ldots,n$, and suppose $f\notin\Z^n$. The tableau
restricted to these $n$ rows is of the form
\begin{equation}\label{eq:tableau}x=f+\sum_{j=1}^m  r^js_j,\quad x\in P\cap\Z^n,\, s\geq 0,\, \mbox { and } s_j\in \Z, j\in I,\end{equation}
where $r^j\in\R^n$, $j=1,\ldots, m$, $I$ denotes the set of
integer nonbasic variables, and $P$ is some full-dimensional rational polyhedron in $\R^n$, representing constraints on the basic variables (typically nonnegativity or bounds on the variables).

An important question in integer programming is to derive valid inequalities for~\eqref{eq:tableau}, cutting off the current infeasible solution $x=f$, $s=0$. We consider a simplified model where the integrality conditions are relaxed on all nonbasic variables.
So we study the following model, introduced by Johnson~\cite{Johnson},
\begin{equation}\label{eq:finite}x=f+\sum_{j=1}^m r^js_j,\quad x\in S,\, s\geq 0,\end{equation}
where where $S = P\cap\Z^n$ and $f\in\conv(S)\setminus\Z^n$.
Note that every inequality cutting off the point $(f,0)$ can be expressed in terms of the nonbasic variables $s$ only, and can therefore be written in the form $\sum_{j=1}^m \alpha_j s_j\geq 1$.

Basu et al.~\cite{bas-conf-cor-zam} study ``general formulas'' to generate such inequalities. By this, we mean functions $\psi\,:\,\R^n\rightarrow \R$ such that the inequality
$$\sum_{j=1}^m \psi(r^j) s_j\geq 1$$
is valid for~\eqref{eq:finite} for every choice of $m$ and vectors $r^1,\ldots,r^m\in\R^n$. We refer to such functions $\psi$ as {\em valid functions} (with respect to $f$ and $S$). Since one is interested in the deepest inequalities cutting off $(f,0)$, one only needs to investigate (pointwise) {\em minimal valid functions}.

Given a sublinear function $\psi$ such that the set \begin{equation}\label{eq:B_psi} B_\psi=\{x\in\R^n\st \psi(x-f)\leq 1\}\end{equation}
is {\em $S$-free} (i.e. $\intr(B_{\psi})\cap S=\emptyset$), it is easily shown that $\psi$ is a valid function. Indeed, since $\psi$ is sublinear, $B_\psi$ is a closed convex set with $f$ in its interior, thus,   given any solution $(\bar x,\bar s)$ to~\eqref{eq:finite}, we have
$\sum_{j=1}^m \psi(r^j)\bar s_j\geq \psi(\sum_{j=1}^m r^j\bar s_j)=\psi(\bar x-f)\geq 1,$
where the first inequality follows from sublinearity and the last one from the fact that $\bar x\notin\intr(B_\psi)$.

On the other hand, Dey and Wolsey~\cite{DW} show that, if $\psi$ is a minimal valid function, $\psi$ is sublinear and $B_\psi$ is an $S$-free convex set with $f$ in its interior.
\smallskip

Using Theorem~\ref{thm:sublinear}, Basu et al. \cite{bas-conf-cor-zam} prove that, if $\psi$ is a minimal valid function, then $B_\psi$ is a {\em maximal $S$-free convex set}. That is, $B_\psi$ is an inclusionwise maximal convex set such that $\intr(B_\psi)\cap S=\emptyset$. Furthermore, they give a explicit formulas for all minimal valid functions.

In order to prove this result, they first show that maximal $S$-free convex sets are polyhedra.
Therefore, a maximal $S$-free convex set $B\subseteq \R^n$ containing $f$ in its interior can be uniquely written in the form $B=\{x\in \R^n:\;\langle a_i, x-f \rangle \le 1,\;  i=1,\ldots,k\}$. Thus, if we let $K := \{x-f\st x\in B\}$, Theorem~\ref{thm:sublinear} implies that the function $\psi_B:=\rho_K$ is the minimal sublinear function such that $B=\{x\in\R^n\st \psi_B(x-f)\leq 1\}$. Note that, since $K^*=\conv\{0,a_1,\ldots,a_k\}$,  $\psi_B$ has the following simple form
\begin{equation}\label{eq:psiB}\psi_B(r)=\max_{i=1,\ldots,k}\langle a_i,r\rangle,\quad \forall r\in\R^n.\end{equation}

From the above, it is immediate that, if $B$ is a maximal $S$ free convex set, then the function $\psi_B$ is a minimal valid function.

The main use of Theorem~\ref{thm:sublinear} is in the proof of the converse statement, namely, that every minimal valid function is of the form $\psi_B$ for some maximal $S$-free convex set $B$ containing $f$ in its interior. The proof outline is as follows. Suppose $\psi$ is a minimal valid function. Thus $\psi$ is sublinear and $B_\psi$ is an $S$-free convex set. Let $K:= \{x-f\st x\in B_\psi\}$.
\smallskip

\noindent $i)$ Since  $\{x\in\R^n\st \rho_K(x-f)\leq 1\}=B_\psi$,  Theorem~\ref{thm:sublinear} implies that $\psi\geq\rho_K$.
\smallskip

\noindent $ii)$ Theorem~\cite{bas-conf-cor-zam}.  {\em There exists a maximal $S$-free convex set $B=\{x\in \R^n:\; \langle a_i, x-f \rangle \le 1,\;  i=1,\ldots,k\}$ such that $a_i \in \ol\conv (\hat K ) $ for $i = 1, \ldots , k$.}

\smallskip
\noindent  $iii)$ For every $r\in \R^n$, we have $\psi (r) \geq \rho_K(r) = \sup_{y \in \ol\conv (\hat K ) } \langle y, r \rangle \geq \max_{i = 1, \ldots , k} \langle a_i, r \rangle = \psi_B (r)$, where the first inequality follows from i) and the second  from ii). Since $\psi_B$ is a valid function, it follows by the minimality of $\psi$ that $\psi=\psi_B$.

\end{document}